\documentclass[11pt]{article}

\usepackage[margin=1in]{geometry}
\usepackage{graphicx}
\usepackage{amsmath,amssymb,amsfonts,amsthm}
\usepackage{mathtools}
\usepackage{mathrsfs}
\usepackage{xcolor}
\usepackage{booktabs}
\usepackage{csvsimple}
\usepackage{siunitx}
\usepackage[natbib=true,style=numeric,sorting=none]{biblatex}
\usepackage[hidelinks]{hyperref}

\newtheorem{theorem}{Theorem}
\newtheorem{proposition}[theorem]{Proposition}
\newtheorem{lemma}{Lemma}
\newtheorem{corollary}{Corollary}
\theoremstyle{definition}

\theoremstyle{remark}

\DeclareMathOperator{\Tr}{tr}
\newcommand{\mat}[1]{{\boldsymbol{#1}}}
\newcommand{\vect}[1]{{\mathbf #1}}

\newcommand{\inprod}[2]{#1^T #2}
\DeclareMathOperator{\expct}{\mathbb E}
\newcommand{\expect}[1]{\expct \left[#1\right]}
\newcommand{\EQ}[1]{(\ref{#1})}
\newcommand{\bx}{{\vect x}}
\newcommand{\norm}[1]{\lVert#1\rVert}

\newcommand{\highlight}[1]{#1}
\newcommand{\va}{\vect{a}}

\newcommand{\mA}{\mat{A}}
\newcommand{\mG}{\mat{G}}
\newcommand{\mP}{\mat{P}}
\newcommand{\mQ}{\mat{Q}}

\newcommand{\I}{\mat{I}}
\newcommand{\GG}{\boldsymbol{{\Phi}}}

\title{Volume Sampling and Spectral Equalization in Randomized Alternating Projections}
\author{Alireza Entezari\thanks{Corresponding author: \texttt{entezari@ufl.edu}} \and Arunava Banerjee \and Leila Kalantari\\[0.5em]\normalsize Department of CISE, University of Florida, Gainesville, FL, USA}
\date{}

\begin{document}
\maketitle

\begin{abstract}
We study a family of randomized alternating projection methods that alternate among subspaces spanned by subsets of \(n\) vectors from a prescribed set. We derive an explicit characterization of performance bounds when these subspaces are sampled with probabilities proportional to the volumes subtended by the vectors in their corresponding subsets. For each \(n\), the bound is obtained through an explicit nonlinear transformation of the spectrum of an associated matrix. Our analysis reveals an explicit spectral equalization mechanism that drives the spectrum toward improved conditioning as $n$ increases, establishing an unexpected connection between volume sampling and the Cayley-Hamilton theorem. Furthermore, we introduce a computationally efficient uniform sampling scheme that achieves comparable theoretical guarantees through a specific relaxation strategy. Empirical results show that this relaxation method achieves convergence rates comparable to, and often better than, volume sampling, while avoiding the infeasibility of volume sampling at large scale. Besides the randomized Kaczmarz, these results also directly apply to randomized Gauss-Seidel and coordinate descent methods.
\end{abstract}

\noindent\textbf{Keywords:} Volume Sampling, Randomization, Iterative Methods, Alternating Projections

\section{Introduction}\label{sec1}
Iterative methods are a cornerstone of computational mathematics, with broad applications to large-scale problems in scientific computing, optimization, machine learning, and imaging. Over the past two decades, randomized techniques have significantly advanced both the theory and practice of iterative methods. In particular, randomized variants of several classical algorithms, including Kaczmarz~\cite{strohmer2009randomized}, Gauss-Seidel~\cite{leventhal2010randomized, tu2017breaking}, and coordinate descent~\cite{nesterov2012efficiency}, have been developed to address large-scale and streaming problems, where low memory requirements and simple iterations are essential.

Convergence analysis of iterative methods is a classical topic~\cite{varga1962iterative, saad2003iterative, hackbusch2016iterative}. For stationary methods such as Jacobi, Gauss--Seidel, and successive over-relaxation (SOR), convergence is governed by the spectral radius of the corresponding iteration matrix. A fundamental challenge is to relate this spectral radius to readily computable properties of the underlying problem, such as the spectrum or condition number of the system matrix~\cite{ortega1979extensions}. While such relationships are available in important special cases, they typically rely on additional structural assumptions, such as symmetry, positive definiteness, or consistent ordering~\cite{berman1994nonnegative, hackbusch2016iterative}.

For projection methods such as the Kaczmarz algorithm, the challenge takes a different form. Convergence is governed by the geometry of successive projections onto subspaces~\cite{deutsch1992method}: the convergence rate depends not only on the angles between subspaces but also on the order in which they, or equivalently the rows of the matrix, are visited~\cite{deutsch1995angle, galantai2005rate}. Existing convergence bounds are therefore expressed in terms of geometric quantities, such as sequences of principal angles encountered during a projection cycle~\cite{halperin1962product, deutsch1997rate, ssw1977, kayalar1988error}. Although these bounds provide valuable geometric insight, they are often difficult to evaluate in practice~\cite{galantai2005rate} and do not directly relate convergence to the spectral properties of the underlying system.

Randomization offers a way to circumvent this difficulty by enabling convergence rates to be expressed in terms of spectral properties of the underlying problem. In their pioneering work, Strohmer and Vershynin~\cite{strohmer2009randomized} showed that the Kaczmarz algorithm for solving \(\mA \bx = \vect{b}\), when randomized by selecting individual rows with probabilities proportional to the squares of their Euclidean norms, converges exponentially in expectation. This result expresses the convergence rate in terms of the smallest singular value of \(\mA\), or equivalently its scaled condition number. This randomization was subsequently extended to coordinate descent, yielding a randomized form of the Gauss--Seidel algorithm~\cite{leventhal2010randomized}. Since then, randomized iterative methods have attracted considerable attention, leading to numerous variants, acceleration techniques~\cite{liu2016accelerated, tu2017breaking}, and generalizations~\cite{gower2015randomized}.

We show that the randomized Kaczmarz method of Strohmer and Vershynin is the first member of a hierarchy of randomized projection methods in which additional work per iteration leads to explicitly characterized improvements in conditioning. Specifically, we generalize randomized Kaczmarz by updating the iterate at each step with the orthogonal projector associated with a subset of \(n\) rows of \(\mA\), selected with probability proportional to the squared volume subtended by that subset. Our principal theoretical contribution is an exact characterization of the expected projector induced by volume sampling. We show that volume sampling induces a spectral transformation that progressively equalizes the singular-value spectrum of \(\mA\), and that this transformation is described by a recursive form of the Cayley--Hamilton theorem, yielding the first exact characterization of its effect on conditioning. While volume sampling has been studied for its theoretical advantages in a variety of algorithmic settings~\cite{frieze2004fast, rudelson2007sampling, deshpande2006adaptive, derezinski2018reverse}, and specifically for randomized coordinate descent~\cite{rodomanov2020randomized}, our analysis reveals its precise effect on conditioning.

From a practical perspective, the combinatorial nature of subsets limits the applicability of volume sampling to large-scale problems. Although Markov chain sampling techniques have been developed~\cite{deshpande2006adaptive, deshpande2010efficient} to mitigate this combinatorial difficulty, their computational overhead remains a major obstacle in high-dimensional and streaming settings. We therefore introduce a simple uniform sampling scheme that, through a relaxation, approximates the effect of volume sampling while retaining comparable theoretical guarantees. The tradeoff lies in the theoretical convergence bound, while in our experiments the relaxed method consistently outperforms volume sampling.

The remainder of the paper is organized as follows. Section~\ref{sec:preliminaries} introduces the notation used throughout the paper and briefly reviews the randomized Kaczmarz method. Section~\ref{sec:results} presents the main results, including the exact characterization of the expected projector under volume sampling, the resulting spectral equalization, and the relaxed uniform sampling scheme. Section~\ref{sec:proofs} contains the proofs of the main results. Section~\ref{sec:experiments} presents numerical experiments. Finally, Section~\ref{sec:discussion} places our results in the context of related work, discusses their implications, and outlines directions for future research.

\section{Preliminaries}\label{sec:preliminaries}

Vectors are denoted by bold lowercase letters and matrices by bold uppercase letters. The transpose of a matrix \(\mA\) is denoted by \(\mA^T\). For a symmetric matrix \(\mA\), we write \(\lambda_{\min}(\mA):=\lambda_1(\mA)\le\cdots\le\lambda_N(\mA)\) for its eigenvalues. The identity matrix is denoted by \(\I\), and \(\Tr(\mA)\) denotes the trace of \(\mA\).

For a consistent system of equations \(\mA\bx=\vect{b}\), where \(\mA\in\mathbb{R}^{M\times N}\) with \(M\ge N\), the Kaczmarz algorithm generates iterates by successive orthogonal projections onto the affine hyperplanes defined by the rows of \(\mA\) and the corresponding entries of \(\vect{b}\). Let \(\mathcal{A}:=\{\va_1^T,\dots,\va_M^T\}\) denote the collection of row vectors of \(\mA\). At each step, the randomized Kaczmarz method selects a row \(\va^T\in\mathcal{A}\) according to a prescribed probability distribution and updates the iterate by
\begin{equation}
\bx_{k+1}
:=
\bx_k
+
\frac{b_{\va}-\va^T\bx_k}{\va^T\va}\,\va,
\label{eq:kacz}
\end{equation}
where \(b_{\va}\) denotes the entry of \(\vect{b}\) corresponding to the selected row. The iterates converge to a point \(\bx_\star\) in the intersection of the hyperplanes.

When rows are selected independently with probabilities \(p_\va=\norm{\va}^2/\norm{\mA}_F^2\), where \(\norm{\mA}_F^2=\sum_{\va^T\in\mathcal A}\norm{\va}^2=\Tr(\mA^T\mA)\), the convergence rate admits the bound
\begin{equation}\label{eq:SV}
    \expect{\norm{\bx_{k+1}-\bx_\star}^2\mid\bx_k}
    \le
    (1-\mu_1)\norm{\bx_k-\bx_\star}^2,
    \qquad
    \mu_1
    :=
    \frac{\lambda_{\min}(\mA^T\mA)}
         {\Tr(\mA^T\mA)}.
\end{equation}
Since the rows are sampled independently at each iteration, this one-step estimate immediately yields the global convergence bound
\[
\expect{\norm{\bx_k-\bx_\star}^2}
\le
(1-\mu_1)^k
\norm{\bx_0-\bx_\star}^2.
\]

For an integer \(n\ge1\), let
\[
\mathcal{A}^{(n)}
:=
\left\{
[\va_{i_1}\ \cdots\ \va_{i_n}]^T
\in\mathbb{R}^{n\times N}
:
1\le i_1<\cdots<i_n\le M
\right\}
\]
denote the collection of the \(\binom{M}{n}\) submatrices of \(\mA\) with \(n\) rows. By a slight abuse of notation, we identify each element of \(\mathcal{A}^{(n)}\) with the corresponding matrix \(\mA_n\), and denote by \(\vect{b}_{\mA_n}\) the corresponding subvector of \(\vect{b}\). Projection onto the affine subspace defined by \((\mA_n,\vect{b}_{\mA_n})\) yields the iteration
\begin{equation}\label{eq:multi_row}
    \bx_{k+1}
    :=
    \bx_k
    +
    \mA_n^T(\mA_n\mA_n^T)^\dagger
    \left(
        \vect{b}_{\mA_n}
        -
        \mA_n\bx_k
    \right),
\end{equation}
where \(\dagger\) denotes the Moore--Penrose pseudoinverse. The randomized methods developed in this paper are obtained by selecting the matrices \(\mA_n\in\mathcal{A}^{(n)}\) according to carefully designed probability distributions.

\section{Results}\label{sec:results}

\subsection{Volume Sampling}
Generalizing the notion of squared Euclidean norm of row vectors, we show when the subsets $\mA_n \in \mathcal{A}^{(n)}$ are sampled according to the square of volume their corresponding set of rows subtend, $\det(\mA_n\mA_n^T)$, the rates of convergence for iterations in~\EQ{eq:multi_row} are explicitly determined from the spectrum of $\mA$ and $n$. Our main result shows explicitly a { nonlinear transformation} that occurs to the spectrum of $\mA$ for each $n>1$.
More specifically, let $\lambda^{(1)}_1 \le \dots \le \lambda^{(1)}_N$ denote the eigenvalues of $\mA^T \mA$ with $\mu_1 = \lambda^{(1)}_1 / \sum_i \lambda^{(1)}_i$ providing the rate in~\EQ{eq:SV} for iterations with individual rows. For $n=2$ the transformed eigenvalues are given by the quadratic polynomial $\lambda_i^{(2)} := \lambda_i^{(1)}(\sum_{j=1}^N\lambda_j^{(1)} - \lambda_i^{(1)})$ which preserves the ordering of eigenvalues and yields the convergence rate $\mu_2 := 2\lambda^{(2)}_1 / \sum_i \lambda^{(2)}_i$. The following theorem generalizes this construction to arbitrary subset sizes.

\begin{theorem}\label{thm:volume_sampling}
For a fixed \(n>1\), consider the randomized alternating projections in~\EQ{eq:multi_row}, where each \(\mA_n \in \mathcal{A}^{(n)}\) is sampled with squared volume probability
\[
p_{\mA_n}
:=
\frac{v_{\mA_n}}
{\sum_{\mA_n' \in \mathcal{A}^{(n)}} v_{\mA_n'}}\qquad {\rm with} \quad v_{\mA_n} := \det(\mA_n \mA_n^T).
\]
Then the conditional expected error satisfies
\begin{equation*}
    \expect{\norm{\bx_{k+1} - \bx_\star}^2 \mid \bx_k}
    \le
    (1-\mu_n)\norm{\bx_k-\bx_\star}^2.
\end{equation*}
The rate parameter is determined, recursively, from the eigenvalues $\{\lambda^{(1)}_i\}_{i=1}^N$ of $\mA^T\mA$ by
\begin{equation}\label{eq:evolution}
    \mu_n
    :=
    n \frac{\lambda_1^{(n)}}{\sum_{j=1}^N \lambda_j^{(n)}},
    \qquad
    \lambda_i^{(n)}
    :=
    \lambda_i^{(1)}
    \left(
        \frac{\sum_{j=1}^N \lambda_j^{(n-1)}}{n-1}
        -
        \lambda_i^{(n-1)}
    \right).
\end{equation}
\end{theorem}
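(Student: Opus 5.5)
The plan is to reduce the statement to an exact computation of the expected projector. Write $\vect e_k := \bx_k - \bx_\star$. Since $\mA_n\bx_\star = \vect b_{\mA_n}$, the update \EQ{eq:multi_row} gives $\vect e_{k+1} = (\I - \mP_{\mA_n})\vect e_k$, where $\mP_{\mA_n} := \mA_n^T(\mA_n\mA_n^T)^\dagger\mA_n$ is the orthogonal projector onto the row space of $\mA_n$. Because $\I-\mP_{\mA_n}$ is an orthogonal projector,
\[
\expect{\norm{\vect e_{k+1}}^2 \mid \bx_k} = \vect e_k^T(\I - \expect{\mP_{\mA_n}})\vect e_k .
\]
It therefore suffices to show $\lambda_{\min}(\expect{\mP_{\mA_n}}) \ge \mu_n$; in fact I will show it is equal.

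\textbf{Normalizer and trace.} Only subsets with $v_{\mA_n}>0$ contribute, and for these $\mA_n$ has full row rank, so $\Tr \mP_{\mA_n} = n$. By Cauchy--Binet,
\[
\sum_{\mA_n} v_{\mA_n} = e_n(\lambda^{(1)}),
\]
the $n$-th elementary symmetric polynomial of the eigenvalues of $\mA^T\mA$. Hence $\mG := \sum_{\mA_n} v_{\mA_n}\mP_{\mA_n}$ has trace $n\,e_n$, and $\expect{\mP_{\mA_n}} = \mG/e_n$ has trace $n$. Consequently, if the eigenvalues of $\mG$ are proportional to $\lambda^{(n)}_i$, then $\lambda_{\min}(\expect{\mP_{\mA_n}}) = n\lambda^{(n)}_{\min}/\sum_j\lambda^{(n)}_j$.

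\textbf{Spectrum of $\mG$ (the main obstacle).} The key identity is the following. For a unit vector $\vect x$, with $\mA_n\mA_n^T$ invertible,
\[
\det\bigl(\mA_n(\I-\vect x\vect x^T)\mA_n^T\bigr) = v_{\mA_n}\bigl(1 - \vect x^T\mP_{\mA_n}\vect x\bigr)
\]
by the matrix determinant lemma. Summing over all subsets and applying Cauchy--Binet to $\mA(\I-\vect x\vect x^T)$ gives
\[
\vect x^T\mG\vect x = e_n(\mA^T\mA) - e_n\bigl((\I-\vect x\vect x^T)\mA^T\mA(\I-\vect x\vect x^T)\bigr).
\]

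To diagonalize $\mG$, I would use equivariance: replacing $\mA$ by $\mA\mQ$ with $\mQ$ orthogonal gives $\mG \mapsto \mQ^T\mG\mQ$. Taking $\mQ$ to be the reflection $\I - 2\vect u_i\vect u_i^T$ in an eigenvector $\vect u_i$ of $\mA^T\mA$ leaves $\mA^T\mA$ invariant, and hence leaves $\mG$ invariant. So $\mG$ commutes with every such reflection, which makes it diagonal in the eigenbasis $\{\vect u_i\}$. Repeated eigenvalues cause no trouble, since any orthonormal eigenbasis works.

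With $\vect x = \vect u_i$, the compressed matrix has spectrum $\lambda^{(1)}$ with $\lambda_i$ replaced by $0$. This gives the eigenvalue
\[
g_i = e_n(\lambda) - e_n(\lambda\setminus i) = \lambda_i\, e_{n-1}(\lambda\setminus i).
\]

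\textbf{Matching the recursion, and ordering.} By induction on $n$, I would check that $\lambda^{(n)}_i = (n-2)!\,\lambda_i e_{n-1}(\lambda\setminus i)$ for $n\ge 2$; the $n=2$ case matches the quadratic formula given before the theorem. The inductive step uses two identities:
\[
\sum_i \lambda_i e_{n-2}(\lambda\setminus i) = (n-1)e_{n-1}, \qquad e_{n-1}(\lambda\setminus i) = e_{n-1} - \lambda_i e_{n-2}(\lambda\setminus i).
\]
Substituting $\lambda^{(n-1)}_i = (n-3)!\,\lambda_i e_{n-2}(\lambda\setminus i)$ into \EQ{eq:evolution} gives exactly $(n-2)!\,\lambda_i e_{n-1}(\lambda\setminus i)$. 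The multiplicative constant cancels in $\mu_n$.

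Finally, I need the minimum of $g_i$ to sit at $i=1$. For $i<j$, expanding over whether index $j$ (resp.\ $i$) is included yields
\[
\lambda_i e_{n-1}(\lambda\setminus i) - \lambda_j e_{n-1}(\lambda\setminus j) = (\lambda_i-\lambda_j)\,e_{n-1}(\lambda\setminus\{i,j\}) \le 0 .
\]
So the ordering is preserved, $\lambda_{\min}(\expect{\mP_{\mA_n}}) = \mu_n$, and the bound follows.
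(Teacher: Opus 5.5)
Your proof is correct and takes a genuinely different route from the paper. The paper proves the matrix identity $\GG_n=\GG_1\bigl(\tfrac{\Tr\GG_{n-1}}{n-1}\I-\GG_{n-1}\bigr)$ for $\GG_n=\sum_{\mA_n}v_{\mA_n}\mP_{\mA_n}$ (your $\mG$) geometrically. It expands a rank-$n$ projector through rank-one projectors and the complementary rank-$(n-1)$ projectors (Lemma~\ref{thm:beauty}), rescales to quasi-projectors $v_n\mP_n$, and extends to dependent sets by continuity. It then sums over ordered $n$-tuples with repetition, where the sum factorizes one vector at a time, and divides by $n!$. The shared eigenvectors come for free because $\GG_n$ is a polynomial in $\mA^T\mA$.

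You never prove the matrix recursion directly. The determinant lemma plus Cauchy--Binet express $\vect x^T\mG\vect x$ as $e_n(\mA^T\mA)$ minus $e_n$ of a rank-one compression. Reflection equivariance then diagonalizes $\mG$, and you read off the closed form $g_i=\lambda_i e_{n-1}(\lambda\setminus i)$, checking the scalar recursion afterwards. This is essentially the elementary-symmetric representation the paper attributes to Rodomanov and Kropotov.

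Your route is shorter, needs no continuity argument, and gives explicit eigenvalues. The paper's matrix recursion also follows from it, since every eigenvalue obeys the recursion on a shared eigenbasis. Your ordering identity $(\lambda_i-\lambda_j)e_{n-1}(\lambda\setminus\{i,j\})\le 0$ also proves something the paper only asserts for $n=2$ but uses for all $n$. That is, $\lambda_1^{(n)}$ remains the smallest transformed eigenvalue, which is needed for $\mu_n$ to equal $\lambda_{\min}(\overline{\mP}_n)$. In exchange, the paper's route produces the matrix-polynomial (Faddeev--LeVerrier) form directly.

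Three minor repairs are needed.

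First, the constant in your induction is off. The recursion is linear in $\lambda^{(n-1)}$, so substituting $c\,\lambda_i e_{n-2}(\lambda\setminus i)$ returns $c\,\lambda_i e_{n-1}(\lambda\setminus i)$ with the same $c$. Starting from $c=1$ at $n=2$ gives $\lambda_i^{(n)}=\lambda_i e_{n-1}(\lambda\setminus i)$ exactly. Your factor $(n-2)!$ is wrong from $n=4$ on, though harmless because it cancels in $\mu_n$.

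Second, the step ``leaves $\mA^T\mA$ invariant, and hence leaves $\mG$ invariant'' relies on $\mG$ depending on $\mA$ only through $\mA^T\mA$. This is not automatic, because the reflected matrix has a different set of rows. It does follow from your quadratic-form identity by polarization, and you should say so.

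Third, when you sum that identity over all subsets, note that singular $\mA_n$ contribute zero to both sides.
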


The nonlinear transformation of eigenvalues of \(\mA^T\mA\) described in~\EQ{eq:evolution} drives the spectrum toward equality as \(n\) increases, with complete equalization at \(n=N\). This evolution is governed by a recursive form of the Cayley–Hamilton theorem, involving the lowest-degree \(n\) terms of the characteristic polynomial of \(\mA^T\mA\), as used in the Faddeev–LeVerrier algorithm for calculating coefficients of the characteristic polynomial.

\subsection{Spectral Equalization}
\begin{figure}[t]
    \centering
    \includegraphics[width=\textwidth]{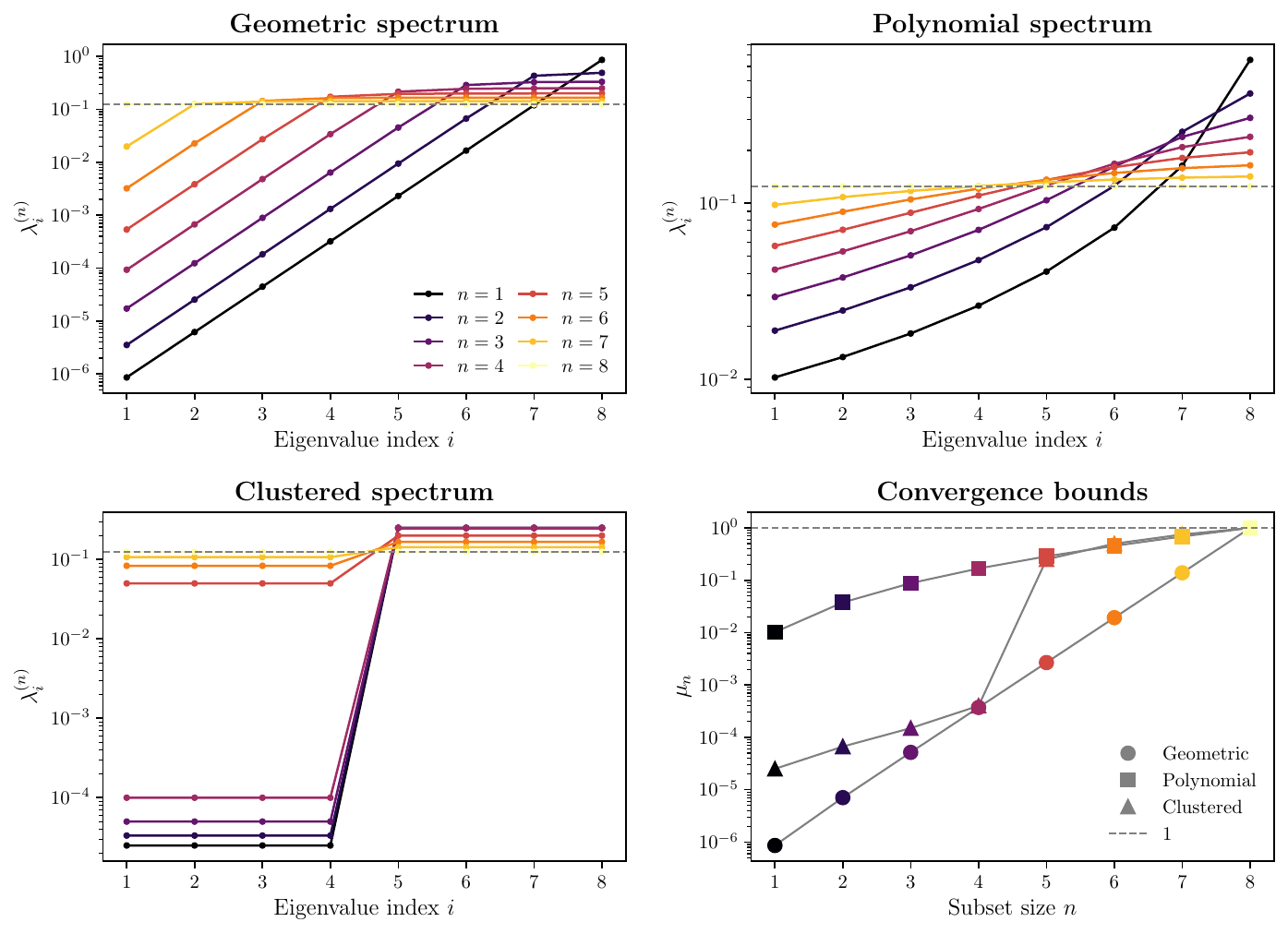}
    \caption{
    Evolution of the normalized transformed spectra under the volume-sampling transformation.
    For geometric (top left), polynomial (top right), and clustered (bottom left) initial spectra of \(\mA^T\mA\), each curve shows
    \(\lambda_i^{(n)} / \sum_{j=1}^N \lambda_j^{(n)}\)
    for a subset size \(n=1,\ldots,N\), with \(N=8\).
    The normalized spectra are plotted on a logarithmic vertical scale.
    The dashed horizontal line marks the uniform value \(1/N\), illustrating the spectral equalization mechanism as \(n\) increases. The bottom right plot shows the improvements on the bound $\mu_n$ on the geometric rate of convergence given by Theorem~\ref{thm:volume_sampling}.
    }
    \label{fig:spectral_equalization}
\end{figure}

The purpose of this section is to illustrate the nonlinear spectral transformation underlying Theorem~\ref{thm:volume_sampling}. We examine how increasing the subset size \(n\) modifies the spectrum and the resulting convergence parameter \(\mu_n\). These experiments isolate the spectral mechanism responsible for the improved convergence rates established by the theory.

In all cases we take \(N=8\). The geometric spectrum consists of logarithmically spaced eigenvalues between \(10^{-6}\) and \(1\), corresponding to condition number \(10^6\). The polynomial spectrum is chosen from the values \(\{i^{-2}\}_{i=1}^N\), sorted in increasing order. The clustered spectrum consists of \(N/2\) eigenvalues equal to \(10^{-4}\) and \(N/2\) eigenvalues equal to \(1\). These choices represent, respectively, a severely ill-conditioned spectrum with smooth decay, a moderately ill-conditioned polynomially decaying spectrum, and a spectrum with a sharp separation between small and large eigenvalue clusters.

For each spectrum, we plot the normalized quantities $\lambda_i^{(n)}/\sum_{j=1}^N \lambda_j^{(n)}$, so that the figure displays the distribution of spectral mass rather than its scale. The dashed horizontal line marks the uniform value \(1/N\), which corresponds to complete spectral equalization. We also report the convergence parameter $\mu_n := {\lambda_1^{(n)}}/({\sum_{j=1}^N \lambda_j^{(n)}})$,
which governs the contraction factor in the bound of Theorem~\ref{thm:volume_sampling}.

Figure~\ref{fig:spectral_equalization} illustrates how the transformation pushes the normalized spectra toward the uniform distribution as \(n\) increases. Correspondingly, \(\mu_n\) increases toward \(1\), illustrating how the volume-sampling transformation improves the conditioning that controls the convergence rate.

\subsection{Relaxed Uniform Sampling}
The spectral equalization mechanism described above motivates the use of volume sampling. However, exact volume sampling becomes computationally impractical in high-dimensional settings because of the combinatorial growth in the number of subsets. We therefore introduce a relaxation strategy for the iterations in~\EQ{eq:multi_row} that enables uniform sampling while retaining a comparable convergence guarantee and avoiding the computational burden associated with volume sampling.

For the collection \(\mathcal{A}^{(n)}\), we denote the mean and maximum squared volumes by
\[
\bar{v}_n
:=
\frac{1}{|\mathcal{A}^{(n)}|}
\sum_{\mA_n \in \mathcal{A}^{(n)}} v_{\mA_n},
\qquad
v_n^{\max}
:=
\max_{\mA_n \in \mathcal{A}^{(n)}} v_{\mA_n}.
\]

\begin{theorem}\label{thm:uniform_sampling}
Consider the relaxed iterations
\begin{equation}\label{eq:relaxed}
    \bx_{k+1}
    :=
    \bx_k
    +
    \omega_{\mA_n}\mA_n^T(\mA_n\mA_n^T)^\dagger
    \left(\vect{b}_{\mA_n} - \mA_n \bx_k\right),
    \qquad
    \omega_{\mA_n}
    :=
    1 + \sqrt{1 - \frac{v_{\mA_n}}{v_n^{\max}}},
\end{equation}
where \(\mA_n \in \mathcal{A}^{(n)}\) is sampled uniformly at each iteration. Then
\[
        \expect{\norm{\bx_{k+1} - \bx_\star}^2 \mid \bx_k}
        \le
        \left(1-\widetilde{\mu}_n\right)
        \norm{\bx_k-\bx_\star}^2 \qquad {\rm with} \quad \widetilde{\mu}_n := \frac{\bar{v}_n}{v_n^{\max}}\mu_n.
\]
\end{theorem}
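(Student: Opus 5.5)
Let \(\mathbf{e}_k:=\bx_k-\bx_\star\). Since the system is consistent, \(\vect{b}_{\mA_n}=\mA_n\bx_\star\). Write \(\mP_{\mA_n}:=\mA_n^T(\mA_n\mA_n^T)^\dagger\mA_n\) for the orthogonal projector onto the row space of \(\mA_n\). Then the relaxed update~\EQ{eq:relaxed} reads
\[
\mathbf{e}_{k+1}=(\I-\omega_{\mA_n}\mP_{\mA_n})\mathbf{e}_k .
\]
Because \(\mP_{\mA_n}\) is idempotent and symmetric, we get
\[
\norm{\mathbf{e}_{k+1}}^2=\norm{\mathbf{e}_k}^2-\omega_{\mA_n}(2-\omega_{\mA_n})\,\mathbf{e}_k^T\mP_{\mA_n}\mathbf{e}_k .
\]
The first key step is the algebraic identity
\[
\omega(2-\omega)=1-(\omega-1)^2=1-\Bigl(1-\frac{v_{\mA_n}}{v_n^{\max}}\Bigr)=\frac{v_{\mA_n}}{v_n^{\max}},
\]
which is exactly what the choice of \(\omega_{\mA_n}\) is designed to produce. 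Note that \(\omega_{\mA_n}\in[1,2]\), so each step is non-expansive.

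Next, take the expectation under uniform sampling over \(\mathcal{A}^{(n)}\):
\[
\expect{\norm{\mathbf{e}_{k+1}}^2\mid\bx_k}=\norm{\mathbf{e}_k}^2-\frac{1}{|\mathcal{A}^{(n)}|\,v_n^{\max}}\sum_{\mA_n}v_{\mA_n}\,\mathbf{e}_k^T\mP_{\mA_n}\mathbf{e}_k .
\]
Factor the sum as \(|\mathcal{A}^{(n)}|\,\bar v_n\) times \(\sum_{\mA_n}p_{\mA_n}\mathbf{e}_k^T\mP_{\mA_n}\mathbf{e}_k\), where \(p_{\mA_n}\) are the volume-sampling probabilities of Theorem~\ref{thm:volume_sampling}. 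This gives
\[
\expect{\norm{\mathbf{e}_{k+1}}^2\mid\bx_k}=\norm{\mathbf{e}_k}^2-\frac{\bar v_n}{v_n^{\max}}\;\mathbf{e}_k^T\Bigl(\sum_{\mA_n}p_{\mA_n}\mP_{\mA_n}\Bigr)\mathbf{e}_k .
\]

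The remaining step is to bound the volume-sampled expected projector. For unrelaxed volume-sampled projections, the same computation with \(\omega=1\) gives \(\expect{\norm{\mathbf{e}_{k+1}}^2}=\norm{\mathbf{e}_k}^2-\mathbf{e}_k^T\expect{\mP}\mathbf{e}_k\). Theorem~\ref{thm:volume_sampling} states that this is at most \((1-\mu_n)\norm{\mathbf{e}_k}^2\). Here one must be careful: that theorem is stated for errors arising from the iteration, which lie in \(\mathrm{range}(\mA^T)\) (or for the full space when \(\mA\) has full column rank, as implicitly assumed by \(\lambda_1^{(1)}>0\)). Since the statement holds for arbitrary \(\bx_k\), it yields \(\mathbf{e}^T\expect{\mP}\mathbf{e}\ge\mu_n\norm{\mathbf{e}}^2\) for all admissible \(\mathbf{e}\).

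Substituting this bound gives the claimed contraction factor \(1-\frac{\bar v_n}{v_n^{\max}}\mu_n\). The main obstacle is only this bookkeeping. One needs to extract from Theorem~\ref{thm:volume_sampling} the quadratic-form lower bound \(\expect{\mP}\succeq\mu_n\I\) on the relevant subspace, rather than the error bound itself. If I instead proved it from scratch, I would use the exact characterization of \(\sum_{\mA_n}v_{\mA_n}\mP_{\mA_n}\) via the Cayley--Hamilton recursion, which the paper establishes in its proofs section.
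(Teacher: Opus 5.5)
Your proof is correct and follows the paper's argument essentially step for step: you use the centered form, the identity $\omega_{\mA_n}(2-\omega_{\mA_n})=v_{\mA_n}/v_n^{\max}$ that turns the relaxed step into the scaled quasi-projector $\mQ_{\mA_n}/v_n^{\max}$, and the rescaling of the uniform average $\sum_{\mA_n} v_{\mA_n}\mP_{\mA_n}/|\mathcal{A}^{(n)}|$ into $\bar v_n\overline{\mP}_n$. The only cosmetic difference is that you recover $\overline{\mP}_n\succeq\mu_n\I$ from the inequality of Theorem~\ref{thm:volume_sampling}, whereas the paper uses $\mu_n=\lambda_{\min}(\overline{\mP}_n)$ directly; for the same reason your caveat about restricting to $\mathrm{range}(\mA^T)$ is unnecessary, since $\mathbf{e}^T\overline{\mP}_n\mathbf{e}\ge\mu_n\norm{\mathbf{e}}^2$ holds for every $\mathbf{e}\in\mathbb{R}^N$.
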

We note that the squared volume is obtained as a by-product of the projection step, namely the solution of an \(n \times n\) system, and therefore does not incur additional computation. Also the maximum squared volume \(v_n^{\max}\), required for the relaxation in~\EQ{eq:relaxed}, need not be known in advance. Instead, the algorithm can maintain a running maximum, updating it before each relaxed step. This online normalization preserves stability by ensuring \(\omega_{\mA_n} \le 2\), while avoiding the need to pre-compute \(v_n^{\max}\) over all subsets. 

While experimental validation shows that the iterations in~\EQ{eq:relaxed} consistently  outperform the volume-sampling iterations in~\EQ{eq:multi_row}, the tradeoff is a weaker bound,  with $\mu_n$ reduced by the factor $\frac{\bar{v}_n}{v_n^{\max}}$.

\section{Proofs of the Main Results}\label{sec:proofs}
The analysis of iterations is more conveniently expressed in the centered form around the solution $\bx_\star$. Let $\mP_{\mA_n}$ denote the orthogonal projector onto the subspace spanned by the subset $\mA_n \in \mathcal{A}^{(n)}$, chosen at step $k$. Since $\vect{b}_{\mA_n} = \mA_n \bx_\star$, the centered form of~\EQ{eq:multi_row} is:
\begin{equation}
    \bx_{k+1} - \bx_\star = (\I - \mP_{\mA_n})(\bx_k - \bx_\star),\qquad {\rm where} \quad \mP_{\mA_n} := \mA_n^T(\mA_n\mA_n^T)^\dagger\mA_n.
\end{equation}
For each choice of $\mA_n \in \mathcal{A}^{(n)}$ for the $k^{\rm th}$-step, the progress can be measured by $\norm{\bx_{k+1} - \bx_\star}^2 = (\bx_{k} - \bx_\star)^T(\I - \mP_{\mA_n}) (\bx_{k} - \bx_\star)$. Then the conditional expected progress of the algorithm, in each step, for a sampling distribution $p$ for choosing $\mA_n \in \mathcal{A}^{(n)}$, is characterized by \begin{equation}
    \mathbb{E}_{\mA_n \sim p}[{\norm{\bx_{k+1} - \bx_\star}^2 \mid \bx_k}]
=
(\bx_k - \bx_\star)^T
(\I - \overline{\mP}_n)
(\bx_k - \bx_\star).
\end{equation}
Here the \emph{expected projector} \(\overline{\mP}_n\) is the average of the projectors, corresponding to $n$-rows, under the sampling distribution $p$ on $\mathcal{A}^{(n)}$:
\begin{equation}\label{eq:vsep}
\overline{\mP}_n
:= \mathbb{E}_{\mA_n \sim p}[\mP_{\mA_n}]
= \sum_{\mA_n \in \mathcal{A}^{(n)}} p_{\mA_n} \mP_{\mA_n}.
\end{equation}
A bound on the conditional expected progress of the algorithm follows from the smallest eigenvalue of the expected projector:
\begin{equation}\label{eq:rate1}
    \expect{\norm{\bx_{k+1} - \bx_\star}^2 \mid \bx_k}
    \le
    (1-\mu_n)\norm{\bx_k-\bx_\star}^2,
    \qquad
    \mu_n := \lambda_{\min}(\overline{\mP}_n).
\end{equation}
Based on independent draws at each step, this expected progress inequality translates to geometric convergence for any initial point $\bx_0$: $\expect{\norm{\bx_{k} - \bx_\star}^2} \le (1-\mu_n)^k\norm{\bx_0-\bx_\star}^2$.

Our key result is the closed-form characterization of the expected projectors purely in terms of $\mA$ and $n$ that can be formulated recursively or in polynomial form.
\begin{theorem}[Volume Sampling Expected Projector]\label{thm:vsep}
For a fixed \(n>1\), consider the randomized alternating projections in~\EQ{eq:multi_row}, where each \(\mA_n \in \mathcal{A}^{(n)}\) is sampled with squared volume probability
\[
p_{\mA_n}
:=
\frac{v_{\mA_n}}
{\sum_{\mA_n' \in \mathcal{A}^{(n)}} v_{\mA_n'}} \qquad {\rm with} \quad v_{\mA_n} := \det(\mA_n \mA_n^T).
\]
Then the expected projector for subspaces spanned by subsets of size $n$ is given by
    \begin{equation}\label{eq:ep_thm3}
        \overline{\mP}_n := \frac{\sum_{\mA_n \in \mathcal{A}^{(n)}}{\left(v_{\mA_n}\mP_{\mA_n}\right)}}
{\sum_{\mA_n' \in \mathcal{A}^{(n)}} v_{\mA_n'}} = n\frac{\mat{\Phi}_n}{\Tr{\mat{\Phi}_n}}
    \end{equation}
where the sequence of matrices $\{\mat{\Phi}_n\}_{n=1}^N$ satisfy the recursion
    \begin{equation}\label{eq:phi_recursion}
    \mat{\Phi}_1 = {\mA^T \mA}, \qquad
    \mat{\Phi}_n = \mat{\Phi}_1 \left(\frac{\Tr{\mat{\Phi}_{n-1}}}{n-1}\I - \mat{\Phi}_{n-1}\right) \quad {\rm for} \quad n > 1.
    \end{equation}
\end{theorem}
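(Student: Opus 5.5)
Writing $\mat{G}=\mA^T\mA$ (so $\mat{\Phi}_1=\mat{G}$), the plan is to identify the unnormalized sum $\mat{S}_n:=\sum_{\mA_n\in\mathcal{A}^{(n)}} v_{\mA_n}\mP_{\mA_n}$ with a fixed multiple of $\mat{\Phi}_n$, and to identify $\sum v_{\mA_n}$ with the corresponding multiple of $\Tr\mat{\Phi}_n/n$. Since $\Tr\mP_{\mA_n}=n$ whenever $v_{\mA_n}>0$, and subsets with $v_{\mA_n}=0$ contribute nothing, we have $\Tr\mat{S}_n=n\sum v_{\mA_n}$. Hence it suffices to prove $\mat{S}_n=c_n\mat{\Phi}_n$ for some scalar $c_n>0$, and the normalization in~\EQ{eq:ep_thm3} then follows automatically.

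\textbf{Step 1 (adjugate form).} For a subset with rows $\mA_n$, Cauchy--Binet gives $v_{\mA_n}=\sum_{|T|=n}\det(\mA_{n,T})^2$, where $T$ ranges over column subsets. I would write $v_{\mA_n}\mP_{\mA_n}=\mA_n^T\operatorname{adj}(\mA_n\mA_n^T)\mA_n$, which also covers rank-deficient subsets. Next I would use the identity $\det(\mA_n\mA_n^T)\,\mP_{\mA_n}=\det(\mA_n\mA_n^T)\I-\mathcal{D}$, where $\mathcal{D}$ is built from the $(n+1)$-row Gram determinants obtained by appending a coordinate vector. Concretely, for a unit vector $\vect{u}$,
\[
\vect{u}^T(\I-\mP_{\mA_n})\vect{u}\,\det(\mA_n\mA_n^T)=\det\bigl([\mA_n;\vect{u}^T][\mA_n;\vect{u}^T]^T\bigr).
\]
Summing over subsets and applying Cauchy--Binet converts $\sum v_{\mA_n}(\I-\mP_{\mA_n})$ into a sum of $n$-th compound (exterior power) quantities of $\mat{G}$. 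This yields the closed form
\[
\mat{S}_n=e_n(\mat{G})\I-\mat{C}_n,\qquad \vect{u}^T\mat{C}_n\vect{u}=\text{sum of principal }n\text{-minors of }\mat{G}\text{ restricted to }\vect{u}^\perp,
\]
where $e_n(\mat{G})=\sum v_{\mA_n}$ is the $n$-th elementary symmetric function of the eigenvalues. The cleanest route is to diagonalize $\mat{G}=\mat{Q}\Lambda\mat{Q}^T$. Everything above is rotation-covariant in the column space, because $v$ and $\mP$ transform naturally under $\mA\mapsto\mA\mat{Q}$ and the sum over row subsets is a function of $\mat{G}$ alone via Cauchy--Binet. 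It therefore suffices to treat diagonal $\mat{G}$, where $\mat{S}_n$ is diagonal with entries
\[
(\mat{S}_n)_{ii}=\lambda_i\, e_{n-1}(\lambda_{\hat i}),
\]
and $e_{n-1}(\lambda_{\hat i})$ denotes the elementary symmetric polynomial of degree $n-1$ in the eigenvalues other than $\lambda_i$. In matrix form this reads $\mat{S}_n=\sum_{k=0}^{n-1}(-1)^k e_{n-1-k}(\mat{G})\,\mat{G}^{k+1}$.

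\textbf{Step 2 (match the recursion).} It remains to show that $\mat{\Phi}_n=(n-1)!^{-1}\mat{S}_n$ up to the normalization, i.e.\ that $\mat{\Phi}_n$ is diagonal with entries proportional to $\lambda_i e_{n-1}(\lambda_{\hat i})$. I would proceed by induction. Using $e_{n-1}(\lambda_{\hat i})=e_{n-1}(\lambda)-\lambda_i e_{n-2}(\lambda_{\hat i})$ and Newton's identity $\sum_i \lambda_i e_{n-2}(\lambda_{\hat i})=(n-1)e_{n-1}(\lambda)$, the inductive hypothesis $\mat{\Phi}_{n-1}=\mathrm{diag}\bigl(\lambda_i e_{n-2}(\lambda_{\hat i})\bigr)$ gives $\Tr\mat{\Phi}_{n-1}/(n-1)=e_{n-1}(\lambda)$. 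Then $\mat{\Phi}_1\bigl(\tfrac{\Tr\mat{\Phi}_{n-1}}{n-1}\I-\mat{\Phi}_{n-1}\bigr)$ has entries $\lambda_i\bigl(e_{n-1}-\lambda_i e_{n-2}(\lambda_{\hat i})\bigr)=\lambda_i e_{n-1}(\lambda_{\hat i})$, so the constant is in fact $1$. This is precisely the Faddeev--LeVerrier recursion. Finally, $\Tr\mat{\Phi}_n=n\,e_n(\lambda)=n\sum v_{\mA_n}$ confirms~\EQ{eq:ep_thm3}.

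\textbf{Main obstacle.} The hard part is Step 1: establishing $(\mat{S}_n)_{ii}=\lambda_i e_{n-1}(\lambda_{\hat i})$ in the eigenbasis, i.e.\ the identity $\sum_{\mA_n} v_{\mA_n}\,\vect{u}^T\mP_{\mA_n}\vect{u}=e_n(\mat{G})-e_n(\mat{G}|_{\vect{u}^\perp})$. I would prove this by applying Cauchy--Binet to the $(n+1)\times N$ matrix $[\mA_n;\vect{u}^T]$ and summing over row subsets. This expresses $\sum v_{\mA_n}(1-\vect{u}^T\mP_{\mA_n}\vect{u})$ as $\sum_{|T|=n}\det\bigl((\mA\mat{Q})^T_{T}(\mA\mat{Q})_{T}\bigr)$ restricted to column sets avoiding the direction $\vect{u}$, after rotating so that $\vect{u}$ is a coordinate axis. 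In the eigenbasis with $\vect{u}=\vect{e}_i$, that sum is exactly $e_n(\lambda_{\hat i})$, and $e_n(\lambda)-e_n(\lambda_{\hat i})=\lambda_i e_{n-1}(\lambda_{\hat i})$. Off-diagonal entries of $\mat{S}_n$ in the eigenbasis vanish, since the expression above is a quadratic form in $\vect{u}$ that is invariant under sign flips of eigen-coordinates.
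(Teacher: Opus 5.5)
Your argument is correct, but it takes a genuinely different route from the paper.

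\textbf{The paper's route.} The paper never diagonalizes $\mA^T\mA$ or invokes Cauchy--Binet. It proves a geometric identity for a single rank-$n$ projector, $\mP_n=\sum_s \sin^{-2}\theta_s\,\mP_1^s(\I-\mP_{n-1}^{\bar s})$, and rescales it into the quasi-projector identity $\mQ_n=\sum_s\mQ_1^s(v_{n-1}^{\bar s}\I-\mQ_{n-1}^{\bar s})$. It extends that identity to linearly dependent sets by a perturbation/continuity argument. It then sums over all ordered $n$-tuples of rows with repetition, where each term factorizes as $\GG_1$ times a sum over $(n-1)$-tuples. Tuples with a repeated row contribute zero, and a final division by $n!$ removes the ordering. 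The recursion is thus derived directly at the matrix level.

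\textbf{Your route.} You compute $\mat{S}_n=\GG_n$ in closed form, through the Gram-determinant distance identity and Cauchy--Binet. In the eigenbasis of $\mA^T\mA$ it is the matrix with eigenvalues $\lambda_i e_{n-1}(\lambda_{\hat i})$. You then check that the recursion reproduces these eigenvalues.

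\textbf{What each buys.} Your route gives the spectrum $\lambda_i e_{n-1}(\lambda_{\hat i})/e_n(\lambda)$ of $\overline{\mP}_n$ explicitly, so the shared eigenvectors, $\Tr\GG_n=n\,e_n(\lambda)$, and the polynomial-form corollary are immediate. Rank-deficient subsets need no continuity argument, since both sides of your identities simply vanish there. It also shows that the paper's Faddeev--LeVerrier recursion is equivalent to the elementary-symmetric-polynomial representation of Rodomanov and Kropotov discussed in the paper's related-work section. The paper's route is basis-free and uses only elementary geometry. It explains the recursion as a summed local identity for individual projectors, rather than verifying it after computing both sides.

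\textbf{Small corrections.} The factor $(n-1)!^{-1}$ at the start of your Step 2 is spurious; as you then find, $\mat{S}_n=\GG_n$ exactly. The identity $\sum_i\lambda_i e_{n-2}(\lambda_{\hat i})=(n-1)e_{n-1}(\lambda)$ is a simple double count rather than Newton's identity. For the off-diagonal step, record explicitly that
\[
\vect{u}^T\mat{S}_n\vect{u}=e_n(\mA^T\mA)\norm{\vect{u}}^2-e_{n+1}(\mA^T\mA+\vect{u}\vect{u}^T)+e_{n+1}(\mA^T\mA),
\]
which depends on $\mA$ only through $\mA^T\mA$. That is what makes your sign-flip invariance argument legitimate.
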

This immediately implies Theorem~\ref{thm:volume_sampling} as the eigenvectors of $\mat{\Phi}_n$ are unchanged from those of $\mat{\Phi}_1$.

In the following subsections, we first derive a recursive expansion for a general rank-$n$ orthogonal projector, $\mP_n$, which forms the foundation of the recursion above. We then prove the theorem through a combinatorial analysis of the collection of orthogonal projectors $\mP_{\mA_n}$ associated with $\mA_n \in \mathcal{A}^{(n)}$.

\subsection{Recursive Expansion of Orthogonal Projectors}
In this subsection we discuss a recursive property of general orthogonal projectors. For that purpose, we denote a set of vectors as $\left\{\va_1, \dots, \va_n\right\}$ and a matrix whose columns are specified by those vectors as $\left[\va_1,\dots,\va_n\right]$.

\begin{lemma}\label{thm:beauty}
Let  $U$ denote the span of $n$ linearly independent vectors  $\{\va_1, \dots, \va_n\}$. For each $1 \leq s \leq n$, let $\mat{P}_1^{s} := \va_s \va_s^T / \norm{\va_s}^2$ be the orthogonal projector into the subspace spanned by $\va_s$, and $\mat{P}_{n-1}^{\bar s}$ be the orthogonal projector into the subspace $U_{\bar s}$ of $U$, spanned by all but $\va_s$. Furthermore, let the angle between $\va_s$ and $U_{\bar s}$ be denoted by $\theta_s$. Then the orthogonal projector, $\mP_n$, into $U$ has the expansion:
\begin{align}
\mat{P}_n &= \sum_{s=1}^n{\frac{1}{\sin^2{\theta_s}}  \mat{P}_1^{s} \left(\mat{I} - \mat{P}_{n - 1}^{\bar s}\right)}.
\label{eq:P-recursion}
\end{align}
\begin{proof}
Let $\mat{R}$ denote the expansion on the right hand side of \EQ{eq:P-recursion}. First, we note that for any $t \neq s$, $\mat{P}_{n-1}^{\bar s} \va_t = \va_t$, which implies that $\left(\mat{I} - \mat{P}_{n - 1}^{\bar s}\right)\va_t = \vect{0}$. We decompose $\va_s = \va_s^\parallel + \va_s^\perp$ into its components in the subspace $U_{\bar s}$ and orthogonal to that subspace: $\va_s^\parallel = \mat{P}_{n-1}^{\bar s} \va_s$ and $\va_s^\perp = \left(\mat{I} - \mat{P}_{n - 1}^{\bar s}\right) \va_s$. The length of the orthogonal component is therefore $\norm{\va_s^\perp} = \norm{\va_s} \sin \theta_s$.

We now use these observations to show that
\begin{equation*}
\mat{R} \va_t = \va_t \qquad 1 \le t \le n.
\end{equation*}
Based on the above, the only term in the summation corresponding to $\mat{R} \va_t$ that is non-zero is when $s=t$, so that
\begin{equation*}
\mat{R} \va_t= {\frac{1}{\sin^2{\theta_t}}  \mat{P}_1^{t} \va_t^\perp}
= {\frac{\inprod{\va_t}{\va_t^\perp}}{\sin^2{\theta_t}\norm{\va_t}^2} \va_t}.
\end{equation*}
Now $\inprod{\va_t}{\va_t^\perp} =\norm{\va_t} \norm{\va_t^\perp} \cos(\angle(\va_t^\perp, \va_t))$, and since $\angle(\va_t^\perp, \va_t)$ is complementary to  $\theta_t = \angle(\va_t^\parallel, \va_t)$, we get $\inprod{\va_t}{\va_t^\perp} = \norm{\va_t} \norm{\va_t^\perp} \sin \theta_t$. Plugging in $\norm{\va_t^\perp} = \norm{\va_t} \sin \theta_t$ then gives the result.

For any vector $\vect{z}$ orthogonal to $U$, we have: $\mat{P}_{n-1}^{\bar s} \vect{z} = \vect{0}$ and $\mat{P}_1^s \vect{z} = \vect{0}$ since $\vect{z}$ is orthogonal to all the $n$ vectors $ \va_1 , \ldots , \va_n$; therefore, $\mat{R} \vect{z} = \vect{0}$.

This shows $\mat{R} = \mat{P}_n$ is the orthogonal projector into $U$, which is unique.
\end{proof}
\end{lemma}

We expand on the notion of orthogonal projectors and introduce a {quasi projector} that is well-defined even when the vectors are not linearly independent:
\begin{equation}
\mat{Q}_n := \left[\va_1,\dots,\va_n\right] {\rm adj}(\mG_n) \left[\va_1,\dots,\va_n\right]^T.
\label{eq:defQ}
\end{equation}
The adjugate matrix, ${\rm adj}(\mG_n)$, is also the cofactor matrix of the Gram matrix $\mG_n := \left[\va_1,\dots,\va_n\right]^T \left[\va_1,\dots,\va_n\right]$ due to its symmetry. When the vectors are linearly dependent $\mat{Q}_n = \mat{0}$ (as we will see), otherwise the quasi projector is just a scaled version of the orthogonal projector:
\begin{align}
\mat{Q}_n = {v_n} \mat{P}_n \quad {\rm where} \quad v_n := \det \mG_n.
\end{align}
Here $v_n$ represents the {\it square} of $n$-volume of the parallelepiped formed by the column vectors of $\left[\va_1,\dots,\va_n\right]$. Using this volume definition, we can present a corollary to Lemma~\ref{thm:beauty}:
\begin{corollary}[Recursive Quasi Projector]
Under the assumptions of the lemma, let $v^s_1 := \norm{\va_s}^2$ and $v^{\bar s}_{n-1}$ denote the square volume of the parallelepiped formed by all but $\va_s$. Moreover, let $\mat{Q}^s_1 = \va_s\, \va_s^T$ and $\mat{Q}^{\bar s}_{n-1}$ denote the quasi projectors corresponding to $\mat{P}^s_1$ and $\mat{P}^{\bar s}_{n-1}$, respectively. Then we have:
\begin{equation}
\mat{Q}_n = \sum_{s=1}^n{\mat{Q}^s_1 \left( v_{n-1}^{\bar s} \mat{I} - \mat{Q}_{n-1}^{\bar s}\right)}.
\label{eq:recQ}
\end{equation}
\begin{proof}
The volume of the parallelepiped equals the product of the volume of any facet and the component of the remaining edge orthogonal to that facet. Consequently, the squared volumes satisfy $v_n = v^{\bar s}_{n-1} v^s_1 \sin^2\theta_s$.
\end{proof}
\end{corollary}

\begin{lemma}[]
\label{lemma2}
For a linearly dependent set of vectors, $\{\va_1, \dots, \va_n\}$, the quasi projector $\mat{Q}_n = \mat{0}$, the zero matrix.

\begin{proof}
Based on~\EQ{eq:defQ}, $\mat{Q}_n^T \mat{Q}_n = \mat{0}$ since $\mG_n {\rm adj}(\mG_n) = (\det \mG_n) \mat{I} = \mat{0}$. This implies that all columns of $\mat{Q}_n$ have zero norm. Hence $\mat{Q}_n = \mat{0}$. \highlight{On the other hand, since Corollary 1 is under the linearly independent assumption, we show, using a continuity argument, that for a linearly dependent set Corollary 1 still holds.

The quasi projector, defined by~\EQ{eq:defQ} $\mat{Q}_n = \left[\va_1,\dots,\va_n\right] {\rm adj}(\mG_n) \left[\va_1,\dots,\va_n\right]^T$, has elements that are each a continuous function of vectors $\va_1, \dots, \va_n$. This follows from the elements of the adjugate matrix (minors) being signed volumes of subsets of $\va$'s. Volume (determinant) is a continuous function of its set of vectors. Corollary 1 can therefore be expanded to include linearly dependent sets since any linearly dependent set of $n \le N$ vectors in $\mathbb R^N$ can be perturbed to become a linearly independent set.}
\end{proof}
\end{lemma}

\subsection{Combinatorial Analysis}
We now return to the expectation in Theorem~\ref{thm:vsep}, taken over
all \(\mA_n\in\mathcal{A}^{(n)}\). Recall that
\(\mP_{\mA_n}\) and \(v_{\mA_n}\) denote, respectively, the orthogonal
projector and squared volume associated with \(\mA_n\), and that the
corresponding quasi-projector satisfies $\mQ_{\mA_n}=v_{\mA_n}\mP_{\mA_n}$. Thus, the expected projector in~\EQ{eq:ep_thm3} can be written as
\[
\overline{\mP}_n
=
\frac{
\sum_{\mA_n\in\mathcal{A}^{(n)}}\mQ_{\mA_n}
}{
\sum_{\mA_n\in\mathcal{A}^{(n)}}v_{\mA_n}
}.
\]
We define for each $1 \le n \le N$ the sum of quasi projectors as
\begin{align}\label{eq:phi_def}
\mat{\Phi}_n
:=
\sum_{\mA_n\in\mathcal{A}^{(n)}}\mQ_{\mA_n}
\end{align}
and, noting that $\GG_1 = \sum_{\mA_1 \in \mathcal{A}^{(1)}}\mA_1^T\mA_1 = \sum_{\va^T \in \mathcal{A}}{\va \va^T} = \mA^T \mA$, show that the sequence
\(\{\mat{\Phi}_n\}_{n=1}^N\) satisfies the recursion
in~\EQ{eq:phi_recursion}. First we show that the normalization constant in the numerator of $\overline{\mP}_n$ is simply calculated from the trace of $\GG_n$.

\begin{proposition}[Normalization constant for volume probabilities]\label{prop:trace}
    The sum of squared volumes over subsets of size $n$ can be computed from $\mat{\Phi}_n$ by
    \begin{equation*}
        \sum_{\mA_n \in \mathcal{A}^{(n)}}{v_{\mA_n}} = \frac{\Tr{\mat{\Phi}_n}}{n}.
    \end{equation*}
\end{proposition}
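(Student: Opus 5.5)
The plan is to compute the trace of each quasi projector separately and then sum. By definition~\EQ{eq:phi_def}, $\Tr{\mat{\Phi}_n} = \sum_{\mA_n\in\mathcal{A}^{(n)}} \Tr{\mQ_{\mA_n}}$. It therefore suffices to show that for every subset of $n$ vectors $\{\va_1,\dots,\va_n\}$ with squared volume $v_n = \det\mG_n$ we have
\[
\Tr{\mQ_n} = n\, v_n .
\]
Summing this identity over $\mathcal{A}^{(n)}$ and dividing by $n$ then gives the proposition.

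To prove the per-subset identity I will split into two cases.

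\emph{Linearly independent vectors.} Here $\mQ_n = v_n \mP_n$. Since $\mP_n$ is the orthogonal projector onto the $n$-dimensional subspace $U$, its trace is the rank $n$, so $\Tr{\mQ_n} = n v_n$.

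\emph{Linearly dependent vectors.} Lemma~\ref{lemma2} gives $\mQ_n = \mat{0}$, and $v_n = \det\mG_n = 0$. Both sides therefore vanish.

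Alternatively, a uniform argument avoids the case split. By cyclicity of the trace and~\EQ{eq:defQ},
\[
\Tr{\mQ_n} = \Tr\!\left({\rm adj}(\mG_n)\,\mG_n\right) = \Tr\!\left((\det\mG_n)\I_n\right) = n \det \mG_n ,
\]
using $\mG_n = [\va_1,\dots,\va_n]^T[\va_1,\dots,\va_n]$ and the defining property of the adjugate. I would present this version as the main argument, since it needs no independence assumption and no continuity step.

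The only point needing care is bookkeeping: $\mQ_{\mA_n}$ is built from the rows of $\mA_n$ as columns, so that $[\va_1,\dots,\va_n] = \mA_n^T$ and $\mG_n = \mA_n\mA_n^T$. Consequently $v_{\mA_n} = \det(\mA_n\mA_n^T)$ matches the sampling weight in Theorem~\ref{thm:vsep}. Once this identification is made, nothing in the argument is a real obstacle; it is a direct trace computation.
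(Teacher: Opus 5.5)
Your proof is correct. The case-split version you sketch first is exactly the paper's argument. The paper writes $\Tr{\mQ_{\mA_n}} = v_{\mA_n}\Tr{\mP_{\mA_n}} = v_{\mA_n}\,{\rm rank}(\mA_n)$ and notes that $v_{\mA_n}\neq 0$ forces ${\rm rank}(\mA_n)=n$.

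The argument you choose to present as the main one takes a genuinely different, and slightly more economical, route. By cyclicity, $\Tr{\mQ_n} = \Tr\bigl({\rm adj}(\mG_n)\mG_n\bigr) = n\det\mG_n$ holds for every $n$-tuple of vectors as a purely algebraic identity. It therefore bypasses both the identification $\mQ_n = v_n\mP_n$ and Lemma~\ref{lemma2}, with its continuity discussion, for dependent sets. A side benefit is that it applies verbatim to ordered tuples in $\mathcal{A}^n$ with repeated rows. That gives the paper's later identity $\Tr{\widetilde{\GG}_n} = n\sum_{\mA_n\in\mathcal{A}^n} v_{\mA_n}$ with no further case analysis.

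What the paper's version buys in return is the geometric reading: the trace of a projector is the dimension of its range. That is the same viewpoint under which $n\,\GG_n/\Tr{\GG_n}$ is interpreted as an expected projector.
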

\begin{proof}
    Since $\Tr{\mQ_{\mA_n}} = v_{\mA_n} \Tr{\mP_{\mA_n}} = v_{\mA_n} {\rm rank}(\mA_n)$ the result follows since $v_{\mA_n}$ is non-zero only if ${\rm rank}(\mA_n) = n$.
\end{proof}

This allows us to complete the proof of our key theorem on volume sampling.
\begin{proof}[Proof of Theorem~\ref{thm:vsep}]
Using the normalization constant provided in Proposition~\ref{prop:trace}, we have that $\overline{\mP}_n = n \frac{\GG_n}{\Tr \GG_n}$ establishing~\EQ{eq:ep_thm3}. To establish the theorem, we show that the sum of quasi projectors defined in~\EQ{eq:phi_def} satisfy the recursion~\EQ{eq:phi_recursion} for $n > 1$ as:
\begin{align*}
\GG_{n} = \GG_1 \left(\frac{\Tr{\GG_{n-1}}}{n-1} \mat{I} - \GG_{n-1}\right).
\end{align*}

Recall that we denote the set $\mathcal A$ containing rows $\va^T \in \mathcal A$ of the matrix $\mA$. Before summing
the quasi projectors as in~\EQ{eq:phi_def} over the $\binom{M}{n}$ submatrices in
$\mathcal A^{(n)}$, we first show a similar recursion for summation over the Cartesian product $\mathcal A^n = \mathcal A\times\cdots\times\mathcal A$, consisting of all $M^n$ ordered $n$-tuples of rows of $\mA$, with
repetitions allowed. Then we argue using symmetries the recursion in this proposition. Accordingly, we define
\begin{equation*}
\widetilde{\GG}_n := \sum_{\mA_n \in \mathcal{A}^n}\mQ_{\mA_n} = \sum_{\va_1^T \in \mathcal{A}} \sum_{\va_2^T \in \mathcal{A}} \cdots \sum_{\va_n^T \in \mathcal{A}} \mat{Q}_n,
\end{equation*}
where $\mQ_n$ denotes the quasi projector corresponding to the set $\{\va_1,\cdots,\va_n\}$ as defined in~\EQ{eq:defQ}. In this expansion to $\mathcal{A}^n$ the relationship between trace and sum of squared volumes, established in Proposition~\ref{prop:trace} for $\mathcal{A}^{(n)}$, still holds: $\Tr{\widetilde{\GG}}_n = {n}\sum_{\mA_n \in \mathcal{A}^n}{v_{\mA_n}}$. The expanded choices allow for summation over individual vectors that can sift through the recursion in~\EQ{eq:recQ}. For example, the first term in~\EQ{eq:recQ} for $s=1$ shows:
\begin{align*}
&\sum_{\va_1^T \in \mathcal{A}} \sum_{\va_2^T \in \mathcal{A}} \cdots \sum_{\va_n^T \in \mathcal{A}} \mat{Q}_1^1 \left( v^{\bar 1}_{n-1} \mat{I} - \mat{Q}^{\bar 1}_{n-1}\right) \\
&= \sum_{\va_1^T \in \mathcal{A}} \mat{Q}^1_1 \sum_{\va_2^T, \dots, \va_n^T \in \mathcal{A}} \left( v^{\bar 1}_{n-1} \mat{I} -  \mat{Q}^{\bar 1}_{n-1}\right) \\
&= \sum_{\va^T \in \mathcal{A}} \mat{Q}_1 \sum_{\va_1^T, \dots, \va_{n-1}^T \in \mathcal{A}} \left( v_{n-1} \mat{I} -  \mat{Q}_{n-1}\right) \\
&= \sum_{\mA_1 \in \mathcal{A}^{(1)}} \mat{Q}_{\mA_1} \sum_{\mA_{n-1} \in \mathcal{A}^{n-1}}{\left(v_{\mA_{n-1}}\I - \mQ_{\mA_{n-1}} \right)} \\
&= \GG_1 \left( \frac{\Tr{\widetilde{\GG}_{n-1}}}{n-1} \mat{I} - \widetilde{\GG}_{n-1} \right).
\end{align*}
Observing that the result of this summation is independent of $s$, allows us to establish:
\begin{equation*}
\widetilde{\GG}_n = n \GG_1 \left(\frac{\Tr{\widetilde{\GG}_{n-1}}}{n-1} \mat{I} - \widetilde{\GG}_{n-1}\right).
\end{equation*}
Now we observe that in a particular choice $\mA_n$ of $n$-rows with replacement, if any row of $\mA$ is selected more than once, then $v_{\mA_n} = 0$ and $\mQ_{\mA_n} = \mat{0}$ for any $n$. This means the summation can be restricted to the set $\mathcal A^{[n]} := \left\{[\va_{i_1}\ \cdots\ \va_{i_n}]^T : 1\le i_1,\ldots,i_n\le M,\; i_j\neq i_k\ \text{for } j\neq k \right\}$, consisting of the $M!/(M-n)!$ ordered selections of $n$ distinct rows of $\mA$, while leaving $\widetilde{\GG}_n$ unchanged:
\begin{equation*}
\sum_{\mA_n \in \mathcal{A}^{[n]}}\mQ_{\mA_n} = \widetilde{\GG}_n = n \GG_1 \left(\frac{\Tr{\widetilde{\GG}_{n-1}}}{n-1} \mat{I} - \widetilde{\GG}_{n-1}\right).
\end{equation*}
To further shrink the space of choices to $n$-combinations, $\mathcal{A}^{(n)}$, we note that permuting the order of the rows in a particular choice does not change the {squared volume}, $v_{\mA_n}$ of the parallelepiped they form. Moreover, permuting the rows in a particular choice does not change the orthogonal projector $\mat{P}_{\mA_n}$ and consequently $\mat{Q}_{\mA_n}$. This means for any $n \ge 1$, we have $\GG_n = \widetilde{\GG}_n/n!$  and consequently:
\begin{align*}
\GG_n &= \frac{1}{n!}\widetilde{\GG}_n = \frac{n}{n!} \GG_1 \left((n-1)!\, \frac{\Tr{{\GG}_{n-1}}}{n-1} \mat{I} - (n-1)! \GG_{n-1}\right)\\
& = \GG_1 \left(\frac{\Tr{{\GG}_{n-1}}}{n-1}\mat{I} - \GG_{n-1}\right).
\end{align*}
\end{proof}

This sequence of matrices $\{\GG_n\}_{n=1}^N$, originating with $\GG_1 = \mathbf{A}^T \mathbf{A}$ and terminating at $\GG_N = \mathbf{I}$, gives the recursive form of the Cayley-Hamilton theorem used to compute the coefficients of the characteristic polynomial of $\mathbf{A}^T\mathbf{A}$ in the Faddeev-LeVerrier algorithm. Unwinding the recursion, we obtain the lowest $n$ terms of the characteristic polynomial of $\GG_1 = \mA^T \mA$ in terms of its powers and traces:
\begin{corollary}[Polynomial Form]
\begin{align*}\label{eq:ggPoly}
\GG_n = \sum_{p=1}^n{(-1)^{p-1} \frac{\Tr{{\GG}_{n-p}}}{n-p} \GG_1^p }.
\end{align*}
\end{corollary}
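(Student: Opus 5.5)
The plan is to prove the polynomial form by induction on \(n\), unwinding the recursion~\EQ{eq:phi_recursion} established in Theorem~\ref{thm:vsep}. First I will fix the meaning of the \(p=n\) term, whose coefficient \(\Tr{\GG_0}/0\) is not literally defined. Write \(c_m := \Tr{\GG_m}/m\) for \(m\ge 1\) and adopt the convention \(c_0 := 1\).

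This convention is natural and consistent with the rest of the paper. By Proposition~\ref{prop:trace}, \(c_m=\sum_{\mA_m\in\mathcal{A}^{(m)}} v_{\mA_m}\) is the total squared volume of \(m\)-subsets. The empty subset has squared volume (an empty determinant) equal to \(1\). This also makes the base case work: for \(n=1\) the claimed sum reduces to \(c_0\GG_1=\GG_1\), which is the initial condition in~\EQ{eq:phi_recursion}. With this notation the claim reads
\[
\GG_n=\sum_{p=1}^n (-1)^{p-1} c_{n-p}\,\GG_1^p .
\]

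For the inductive step I assume the formula for \(\GG_{n-1}\) and substitute it into \(\GG_n=\GG_1\left(c_{n-1}\I-\GG_{n-1}\right)\). This gives
\[
\GG_n = c_{n-1}\GG_1 - \sum_{p=1}^{n-1}(-1)^{p-1}c_{n-1-p}\,\GG_1^{p+1}.
\]
Reindexing with \(q=p+1\) turns the second sum into \(\sum_{q=2}^{n}(-1)^{q-1}c_{n-q}\GG_1^{q}\), since the extra minus sign flips \((-1)^{p-1}\) to \((-1)^{q-1}\). The leading term \(c_{n-1}\GG_1\) is exactly the missing \(q=1\) term. Combining them yields the claimed formula for \(n\).

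The only real subtlety is the \(\Tr{\GG_0}/0\) convention, so I will state it explicitly in the corollary's proof. I would also remark on the connection to the characteristic polynomial. By Proposition~\ref{prop:trace}, the coefficients \(c_m\) equal the elementary symmetric polynomials \(e_m\) of the eigenvalues of \(\mA^T\mA\), via the Cauchy--Binet formula. Hence the formula is precisely the truncated Cayley--Hamilton / Faddeev--LeVerrier expansion.

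For \(n=N\), Cayley--Hamilton then gives \(\GG_1^{-1}\,c_N\) times the identity-type cancellation, so \(\GG_N=c_N\I\). This confirms the remark that the spectrum is fully equalized at \(n=N\), with \(\GG_N\) proportional to \(\I\).
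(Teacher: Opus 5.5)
Your induction is correct and is exactly the paper's ``unwinding the recursion''~\EQ{eq:phi_recursion}. It also makes explicit the convention $\Tr{\GG_0}/0 := 1$, which the paper's $p=n$ term silently requires. The closing aside on $n=N$ is garbled as written: no $\GG_1^{-1}$ is involved, since with $c_m = e_m$ Cayley--Hamilton for $\GG_1$ reads exactly $\sum_{p=1}^N(-1)^{p-1}c_{N-p}\GG_1^p = c_N\I$. That aside is not needed for the corollary, and its conclusion $\GG_N = c_N\I$ is correct (more precise than the paper's $\GG_N=\I$).
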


We can now extend the results of Theorem~\ref{thm:volume_sampling} to establish our relaxed uniform sampling results.
\begin{proof}[Proof of Theorem~\ref{thm:uniform_sampling}]

To establish Theorem~\ref{thm:uniform_sampling}, we consider the centered form of~\EQ{eq:relaxed}:
\begin{equation*}
    \bx_{k+1} - \bx_\star = (\I - \omega_{\mA_n}\mP_{\mA_n}) (\bx_{k} - \bx_\star) \qquad {\rm with} \quad \omega_{\mA_n} := 1 + \sqrt{1 - \frac{v_{\mA_n}}{v_n^{\max}}}.
\end{equation*}
For each choice of $\mA_n \in \mathcal{A}^{(n)}$ for the $k^{\rm th}$-step, the progress can be measured by 
\begin{align*}
    \norm{\bx_{k+1} - \bx_\star}^2 &= (\bx_{k} - \bx_\star)^T\left(\I - \omega_{\mA_n}(2 - \omega_{\mA_n})\mP_{\mA_n}\right) (\bx_{k} - \bx_\star)\\
    & = (\bx_{k} - \bx_\star)^T\left(\I - \frac{v_{\mA_n}}{v_n^{\max}}\mP_{\mA_n}\right) (\bx_{k} - \bx_\star)\\
    &= (\bx_{k} - \bx_\star)^T\left(\I - \frac{1}{v_n^{\max}}\mQ_{\mA_n}\right) (\bx_{k} - \bx_\star).
\end{align*}
The conditional expected progress of the relaxed uniform iterations, in each step, for the uniform sampling distribution $p_{\mA_n} := 1 / |\mathcal{A}^{(n)}|$ for choosing $\mA_n \in \mathcal{A}^{(n)}$, is characterized by 
\begin{equation*}
    \mathbb{E}_{\mA_n \sim u}[{\norm{\bx_{k+1} - \bx_\star}^2 \mid \bx_k}] = (\bx_k - \bx_\star)^T (\I - \frac{1}{v_n^{\max}}\overline{\mQ}_n) (\bx_k - \bx_\star), 
\end{equation*}
with the expected quasi projector defined as \begin{equation*}
    \overline{\mQ}_n := \frac{\sum_{\mA_n \in \mathcal{A}^{(n)}}{\mQ_{\mA_n}}}{|\mathcal{A}^{(n)}|} = \frac{\GG_n}{|\mathcal{A}^{(n)}|}.
\end{equation*}
According to~\EQ{eq:ep_thm3} in Theorem~\ref{thm:volume_sampling} we have $\GG_n = \overline{\mP}_n \sum_{\mA_n \in \mathcal{A}^{(n)}}{v_{\mA_n}}$ which implies $\overline{\mQ}_n = \bar{v}_n \overline{\mP}_n$ with the mean square volume $\bar{v}_n = \sum_{\mA_n \in \mathcal{A}^{(n)}}{v_{\mA_n}} / |\mathcal{A}^{(n)}|$ as defined in Theorem~\ref{thm:uniform_sampling}. This provides
\begin{equation*}
    \widetilde{\mu}_n = \frac{1}{v_n^{\max}}\lambda_{\min}(\overline{\mQ}_n) = \frac{\bar{v}_n}{v_n^{\max}}\lambda_{\min}(\overline{\mP}_n) = \frac{\bar{v}_n}{v_n^{\max}}\mu_n.
\end{equation*}
\end{proof}

\section{Numerical Experiments}\label{sec:experiments}

The numerical validations are organized into two parts. The first compares volume sampling, ordinary uniform sampling, and relaxed uniform sampling on problems for which exact volume-sampling probabilities can be computed. These experiments provide empirical validation of the convergence behavior predicted by Theorems~\ref{thm:volume_sampling} and~\ref{thm:uniform_sampling}. The second part considers larger-scale problems, where exact volume sampling is computationally impractical, and compares only uniform sampling and relaxed uniform sampling to assess the practical scalability of the proposed approach.

\subsection{Small-Scale Comparison}
\label{sec:small_scale_comparison}

The purpose of this experiment is to compare volume sampling, uniform sampling, and relaxed uniform sampling in a regime where the exact volume-sampling probabilities can be computed. This permits a direct comparison between the observed convergence behavior and the theoretical convergence parameters $\mu_n$ and $\widetilde{\mu}_n$.

We consider four row-normalized test matrices. The first is a Gaussian matrix $\mA\in\mathbb{R}^{20\times10}$ whose entries are drawn independently from a standard normal distribution. The remaining three are the $20\times20$ Parter, Prolate (with parameter $w=0.45$), and Wilkinson matrices from the classical matrix-gallery literature. After generation, each nonzero row is normalized to have unit Euclidean norm.

For each matrix, we generate a consistent linear system $\mA\bx_\star=\vect b$, where $\bx_\star$ is drawn from a standard Gaussian distribution, and initialize the iteration with $\bx_0=\vect0$. We consider subset sizes $n\in\{1,2,4\}$. Volume sampling selects subsets with probability proportional to their squared volume, whereas uniform and relaxed uniform sampling use the uniform distribution over all subsets of size $n$. Relaxed uniform sampling employs the subset-dependent relaxation parameter of Theorem~\ref{thm:uniform_sampling}. To reduce  variability, uniform and relaxed uniform sampling use identical sampled subset sequences in each trial.

We report the mean relative squared error $\mathbb{E}\|\bx_k-\bx_\star\|^2/\|\bx_0-\bx_\star\|^2$, averaged over $250$ independent trajectories. The Gaussian experiment is run for $250$ iterations, the Parter and Wilkinson experiments for $500$ iterations, and the Prolate experiment for $4000$ iterations to accommodate its slower convergence. For each matrix and subset size, we also compute the theoretical convergence parameters $\mu_n$ and $\widetilde{\mu}_n=(\bar v_n/v_n^{\max})\mu_n$.

\begin{figure}[t]
    \centering
    \includegraphics[width=\linewidth]{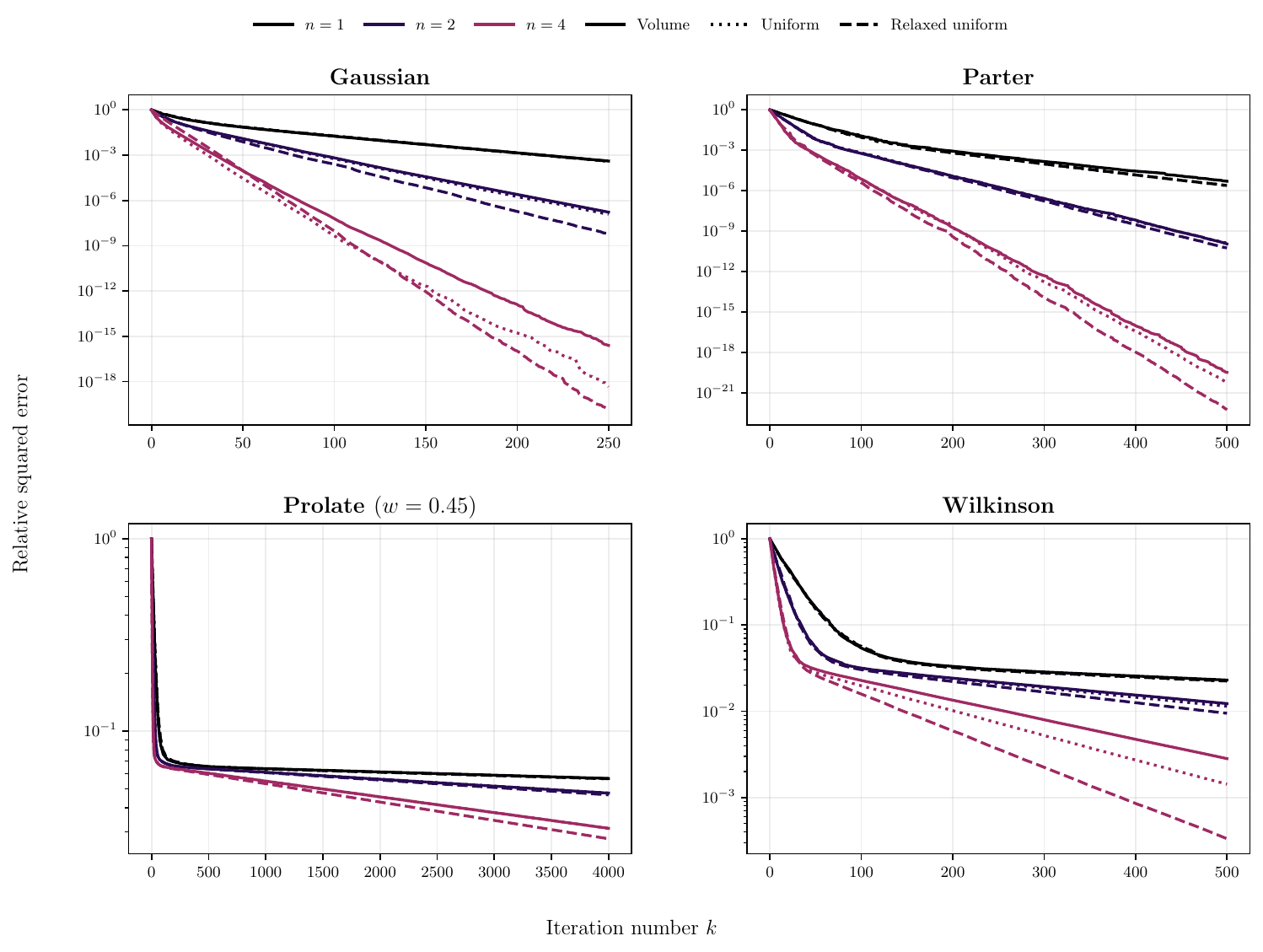}
    \caption{Small-scale comparison of volume sampling, uniform sampling, and relaxed uniform sampling on row-normalized Gaussian, Parter, Prolate, and Wilkinson matrices. Colors distinguish the subset sizes $n=1,2,4$. Solid, dotted, and dashed curves correspond to volume sampling, uniform sampling, and relaxed uniform sampling, respectively. Each curve shows the mean relative squared error on a logarithmic scale, averaged over $250$ independent trajectories.}
    \label{fig:small_scale_comparison}
\end{figure}

Figure~\ref{fig:small_scale_comparison} compares the empirical convergence of the three sampling strategies. The corresponding theoretical convergence parameters are reported in Table~\ref{tab:small_scale_rate_parameters}.

\begin{table}[h]
    \centering
    \caption{Theoretical convergence parameters for the matrices used in the small-scale experiments. Here, $\mu_n$ is the convergence parameter for volume sampling and $\widetilde{\mu}_n=(\bar v_n/v_n^{\max})\mu_n$ is the corresponding parameter for relaxed uniform sampling.}
    \label{tab:small_scale_rate_parameters}
    \begin{tabular}{lccc}
        \toprule
        Matrix & $n$ & $\mu_n$ & $\widetilde{\mu}_n$ \\
        \midrule
        \begin{filecontents*}{figure_2_rate_parameters.csv}
matrix_name,subset_size,mu,mean_to_max_volume_ratio,tilde_mu
Gaussian,1,0.010987514795774669,0.9999999999999998,0.010987514795774667
Gaussian,2,0.025470307411339994,0.8982017122777234,0.02287747372910557
Gaussian,4,0.07164043397976165,0.51571500219917,0.03694604656742227
Parter,1,0.009274596796907005,0.9999999999999998,0.009274596796907003
Parter,2,0.019387653607089238,0.9981970285779369,0.019352698221694796
Parter,4,0.04262193106552776,0.9886852628200536,0.04213967511741952
Prolate $(w=0.45)$,1,1.9583417318269282e-05,0.9999999999999997,1.9583417318269275e-05
Prolate $(w=0.45)$,2,4.1476801793932836e-05,0.9939879117050459,4.12274395993554e-05
Prolate $(w=0.45)$,4,9.402593302656013e-05,0.9714121317904227,9.13379320449143e-05
Wilkinson,1,0.0005199568728563832,0.9999999999999998,0.0005199568728563831
Wilkinson,2,0.0011186809008155848,0.9780062201696567,0.0010940768793826366
Wilkinson,4,0.002615206737296353,0.8732150968509611,0.002283638004393521
\end{filecontents*}
\csvreader[
            late after line=\\
        ]{figure_2_rate_parameters.csv}{}{%
            \csvcoli
            & \csvcolii
            & \num{\csvcoliii}
            & \num{\csvcolv}
        }
        \bottomrule
    \end{tabular}
\end{table}

Table~\ref{tab:small_scale_rate_parameters} reports the theoretical convergence parameters corresponding to the two convergence theorems. For the matrices considered here, both \(\mu_n\) and \(\widetilde{\mu}_n\) increase with the subset size, although for \(\widetilde{\mu}_n\) this monotonic behavior is not guaranteed in general. As predicted by Theorem~\ref{thm:uniform_sampling}, \(\widetilde{\mu}_n\le\mu_n\), since \(\bar v_n/v_n^{\max}\le1\). Nevertheless, these quantities are convergence bounds rather than exact convergence rates, and, in all of our experiments, the relaxed uniform sampling scheme converges faster than volume sampling despite its weaker theoretical guarantee. These values therefore serve as a theoretical baseline for interpreting the empirical convergence observed in Figure~\ref{fig:small_scale_comparison}.

\subsection{Large-Scale Experiments}

The purpose of this experiment is to investigate the practical scalability of relaxed uniform sampling. For large matrices, exact volume sampling becomes computationally infeasible because of the combinatorial growth in the number of subsets. We therefore compare only uniform sampling and relaxed uniform sampling to determine whether the improvements observed in the small-scale experiments persist at practically relevant problem sizes.

We consider two sparse overdetermined systems. The first, \texttt{fanbeam\_64}, is a fan-beam CT forward model for reconstructing a $64\times64$ image from $64$ projection views and $96$ detector elements per view. The resulting matrix has dimensions $5448\times4096$, with $341\,472$ nonzero entries and density $1.53\times10^{-2}$. The second, \texttt{well1850}, is from the SuiteSparse Matrix Collection and has dimensions $1850\times712$, with $8755$ nonzero entries and density $6.65\times10^{-3}$.

For each matrix, we consider subset sizes $n\in\{10,20,40\}$. Uniform and relaxed uniform sampling use identical randomly generated subset sequences and are averaged over ten independent trials. Each trial is run for $500$ equivalent row passes. The exact solution and initial iterate are identical for both methods so that the comparison isolates the effect of the adaptive relaxation.

\begin{figure}[t]
    \centering
    \includegraphics[width=\textwidth]{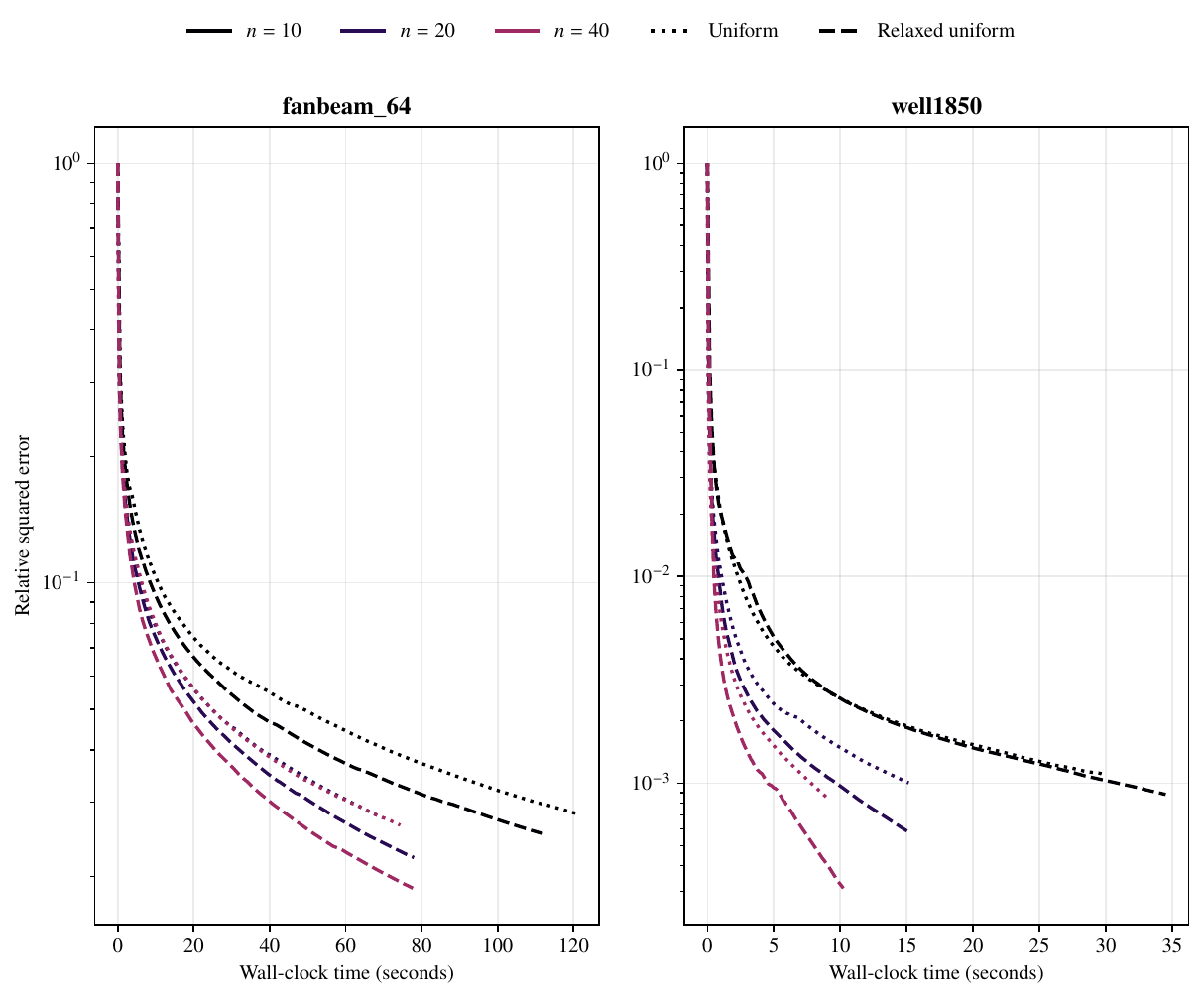}
    \caption{Relative squared error versus wall-clock time for \texttt{fanbeam\_64} and \texttt{well1850}. Colors distinguish the block sizes $n\in\{10,20,40\}$, while dotted and dashed curves correspond to uniform and relaxed uniform sampling, respectively. Each curve is averaged over ten independent trials.}
    \label{fig:large-scale-wall-clock}
\end{figure}

Figure~\ref{fig:large-scale-wall-clock} shows the relative squared error as a function of wall-clock time. For both matrices, relaxed uniform sampling generally reaches a lower error than standard uniform sampling in the same computational time, with the advantage becoming more pronounced for larger subset sizes. These results demonstrate that the reduction in iteration count obtained by the adaptive relaxation is preserved in terms of wall-clock performance. In our implementation, the squared subset volume, $v_{\mA_n}$, is obtained directly from the Cholesky factorization used to compute the projection $\mP_{\mA_n}$, avoiding a separate determinant computation.

\section{Related Work and Discussion}\label{sec:discussion}

Volume sampling has become an important tool in randomized numerical linear algebra and theoretical computer science. Originally introduced for low-rank matrix approximation and projective clustering~\citep{deshpande2006matrix}, it has subsequently been applied to linear regression, where it yields unbiased least-squares estimators~\citep{derezinski2017unbiased}, experimental design~\citep{nikolov2022proportional}, and randomized optimization~\citep{rodomanov-kropotov}. A substantial body of work has also been devoted to the development of efficient exact and approximate sampling algorithms, including reverse iterative and leveraged volume sampling~\citep{derezinski2018reverse}, reflecting both the theoretical importance and the computational challenges of determinantal sampling. Our work contributes to this literature by providing an exact spectral characterization of the expected projector associated with volume sampling and using this characterization to expose structural properties of volume-sampled iterations that are not visible from existing convergence bounds.

The work most closely related to ours is that of Rodomanov and Kropotov~\citep{rodomanov-kropotov}, who introduced volume sampling into randomized block coordinate descent. Their convergence guarantees are governed by the expectation of an embedded inverse principal submatrix associated with the sampled coordinates, which plays the same role as the expected projector in our analysis. In Lemmas~3.1--3.3 of their work, they derived an exact spectral representation for this expectation, in which the spectral coefficients are given by ratios of elementary symmetric polynomials of the eigenvalues. Although this representation eliminates the expectation itself, its spectral coefficients remain combinatorial objects, making the resulting expression difficult to work with in the convergence analysis. They therefore derived a tractable Loewner-order lower bound for the expectation, which serves as the foundation for their convergence analysis. Similar Loewner-order bounds for expected inverse principal submatrices have also played a central role in the analysis of stochastic dual Newton methods~\citep{qu-richtarik-takac-fercoq} and randomized block Gauss--Seidel methods~\citep{tu2017breaking}.

Derezi\'nski and Yang~\cite{derezinski-yang} subsequently sharpened the convergence analysis based on the Rodomanov--Kropotov representation. For a prescribed block size $k'$, their Lemma~4.1 provides a lower bound on the smallest positive eigenvalue of the expected projector by introducing an auxiliary index $k<k'$ and a corresponding tail condition measure $\bar{\kappa}_k$. This leads to their top-$k$-deflation interpretation, with particularly strong consequences when a relatively small number of leading singular values is followed by a favorable, nearly flat tail. Thus, their result provides a sufficient convergence guarantee expressed through selected spectral information, rather than a characterization of how volume sampling transforms the spectrum itself.

In contrast, Theorem~\ref{thm:vsep} shows that, for volume sampling, the Loewner-order relaxation is unnecessary. The expected projector admits an exact characterization through a simple matrix-polynomial recursion rooted in the Cayley--Hamilton theorem, revealing an unexpected connection between volume sampling and classical matrix theory while directly determining both the expectation and its complete spectrum. In particular, if $\mu_n$ denotes the smallest eigenvalue of the expected projector for blocks of size $n$, then the standard per-iteration expected convergence factor is $1-\mu_n$. Our characterization determines $\mu_n$ exactly from the spectrum of the underlying problem and, more importantly, shows that $\mu_n$ increases monotonically with $n$. Consequently, every increase in the volume-sampled block size provably improves the per-iteration convergence rate.

This monotonicity is a structural property of volume sampling rather than a generic consequence of batching. In randomized mini-batch methods, for example, the benefit of increasing the batch size can depend on the problem and can saturate beyond a problem-dependent critical batch size~\citep{jain-kakade-kidambi-netrapalli-sidford}. Volume sampling therefore exhibits a particularly strong relationship between computational effort per iteration and convergence: increasing the block size always improves the per-iteration contraction guaranteed by the exact expected projector. This property follows directly from the spectral transformation characterized in Theorem~\ref{thm:vsep} and provides a qualitative consequence of spectral equalization that cannot be inferred merely from a lower bound on $\mu_n$.

The complete spectral characterization is also important beyond the standard per-iteration bound. The expected-progress inequality in Theorem~1, like analogous one-step bounds for randomized iterative methods, is well known to provide a conservative estimate of observed convergence behavior~\citep{agaskar2014randomized,bai-wang, tu-recht}. The asymptotic analysis~\cite{entezaribanerjee2026} addresses this gap by leveraging the {\em two smallest eigenvalues} of the expected projector to obtain a sharper bound on the long-run convergence rate. Such an analysis requires spectral information beyond $\mu_n$ alone. The exact characterization of the expected projector in Theorem~\ref{thm:vsep} therefore enables an asymptotic convergence analysis that is inaccessible from existing lower bounds on its smallest eigenvalue alone.

Since randomized block coordinate descent for quadratic objectives is equivalent to randomized block Gauss--Seidel under the standard primal--dual correspondence, our characterization immediately strengthens the analysis of volume-sampled coordinate descent methods. More broadly, our result provides an exact first-moment characterization that can be incorporated into analyses of accelerated randomized block methods whose convergence depends on the expected projector operator~\citep{derezinski-lejeune-needell-rebrova}. Viewed in this way, the spectral equalization phenomenon is not simply an interpretation of an improved convergence bound: it describes how the entire spectrum of the expected projector evolves with block size and explains the progressive improvement in conditioning induced by volume sampling.

Block variants of classical iterative methods have a long history in numerical linear algebra, including block Gauss--Seidel, block Jacobi, and block Kaczmarz methods~\citep{saad2003iterative,elfving1980block}. Their randomized counterparts have subsequently received considerable attention~\citep{needell2014paved,liu2016accelerated,gower-richtarik,tu2017breaking}. A central challenge in the analysis of block methods is that their convergence rates typically depend on the conditioning of the individual blocks induced by a prescribed partition of $\mA$. Consequently, the theoretical guarantees are often governed by the worst-conditioned block, a quantity that may be only weakly related to the spectrum of $\mA$ itself and may require carefully constructed row pavings or coordinate partitions. Volume sampling fundamentally changes this perspective by selecting blocks according to their geometric diversity rather than a fixed partition. As a result, the expected convergence rate depends only on global spectral properties of $\mA$, eliminating the explicit dependence on the conditioning of the worst block.

Despite its attractive theoretical properties, the computational cost of volume sampling remains a major obstacle to its use in large-scale iterative methods. A substantial body of work has therefore focused on the development of efficient exact and approximate sampling algorithms. Deshpande and Rademacher~\citep{deshpande2010efficient} proposed polynomial-time algorithms for exact and approximate volume sampling based on standard linear-algebraic techniques, while subsequent work developed Markov-chain methods for related determinantal distributions and established polynomial mixing-time guarantees~\citep{li2017polynomial}. Nevertheless, the computational cost of determinant evaluations, repeated matrix factorizations, and preprocessing remains prohibitive for many large-scale numerical linear algebra problems, where the cost of a single iteration is expected to be nearly linear in the number of nonzeros.

This computational challenge has motivated specialized algorithms for small subset sizes. In particular, Rodomanov and Kropotov~\citep{rodomanov-kropotov} developed an efficient implementation for the practically important case of two-dimensional blocks, and similar ideas have recently been employed in accelerated volume-sampled randomized block Kaczmarz methods~\citep{xiang2025}. Our relaxed uniform sampling strategy takes a different approach. Rather than sampling from the determinantal distribution exactly, it combines inexpensive uniform subset selection with a relaxation that preserves much of the spectral benefit of volume sampling while avoiding its computational overhead. As demonstrated by our experiments, relaxed uniform sampling consistently outperforms existing methods in wall-clock time and remains practical on large-scale problems, while its theoretical contraction guarantee remains close to that of exact volume sampling.

These results suggest that the principal benefit of volume sampling is not the sampling distribution itself, but the spectral equalization mechanism that it induces, which can be approximated efficiently without exact determinantal sampling.
\printbibliography

\end{document}